\documentclass[11pt]{article}
\usepackage[a4paper,
            bindingoffset=0.2in,
            left=0.75in,
            right=0.8in,
            top=1in,
            bottom=1.5in,
            footskip=.5in]{geometry}

\usepackage{setspace}
\setstretch{1.1}
\usepackage{color}
\definecolor{aleacolour}{rgb}{0.09,0.32,0.44} 
\usepackage[pdfborder={0 0 0},pdfborderstyle={},colorlinks=true,linkcolor=aleacolour,citecolor=red,urlcolor=blue]{hyperref}
\usepackage{setspace, ifdraft, blindtext,comment}
\ifdraft{\doublespace}{\singlespace} 
\ifdraft{
    \usepackage{todonotes}}{
    \usepackage[disable]{todonotes}
}

\usepackage{mathtools}
\usepackage{amsmath, amssymb, amsthm}
\usepackage{cases}
\usepackage{float}

\usepackage[utf8]{inputenc}

\usepackage{kantlipsum}
\usepackage{tikz}
\usepackage{pgf,tikz,svg}
\usetikzlibrary{arrows}
\usetikzlibrary[patterns]
\usetikzlibrary{calc,decorations.pathreplacing,positioning}

\usepackage{amsmath, amssymb, amsthm}
\usepackage{color}

\theoremstyle{plain}
\newtheorem{thm}{Theorem}[section]

\newtheorem*{prop*}{Proposition}
\newtheorem{lem}[thm]{Lemma}
\newtheorem*{lem*}{Lemma}

\theoremstyle{definition}
\newtheorem{definition}[thm]{Definition}

\makeatletter
\setlength{\@fptop}{0pt}
\makeatother

\newcommand{\beq}{\begin{equation}}
\newcommand{\eeq}{\end{equation}}
\date{}
\title{On the growth rate of the Stanley-Wilf limit of blockable permutations}
\author{
Saksham Sethi \thanks{North Carolina School of Science and Mathematics, Durham, NC, 27705. Email:
\href{mailto:saksham.sethi.123.123@gmail.com} {\nolinkurl{saksham.sethi.123.123@gmail.com}}.}
\and 
Fan Wei\thanks{Department of Mathematics, Duke University, Durham, NC 27710, USA. Supported by NSF grant DMS-2401414. Email: \href{mailto:fan.wei@duke.edu}
{\nolinkurl{fan.wei@duke.edu}}.}
}
\begin{document}

\maketitle

\begin{abstract}
Given a permutation $\pi$, let $\text{Av}_n(\pi)$ be the number of permutations of length $n$ that avoid $\pi$ as a subpermutation. The celebrated resolution of the Stanley-Wilf conjecture by Marcus and Tardos confirmed that the limit $L(\pi) = \lim_{n \to \infty} |\text{Av}_n(\pi)|^{1/n}$ exists. A central and challenging question concerns the behavior of $L(\pi)$ as a function of the pattern length $|\pi|$. While Fox proved that $L(\pi)$ is exponential in $|\pi|$ for almost all permutations, it is known that $L(\pi)$ grows polynomially for specific structural classes. For instance, $L(\pi)$ is known to be quadratic in $|\pi|$ when $\pi$ is a monotone or a layered permutation. In this paper, we address this question for {\it blockable} permutations $\pi$.    
\end{abstract}

\section{Introduction}

Permutation pattern avoidance has become an active topic in modern combinatorics, with deep connections to computer science, algebra, and geometry. The subject traces back to the work of Knuth \cite{knuth_1997}, Tarjan \cite{tarjan_1972}, and Pratt \cite{pratt_1973} on sorting with stacks and queues, later extended by Avis and Newborn \cite{avis_newborn_1981}, Rosentiehl and Tarjan \cite{rosenstiehl_tarjan_1986}, and Felsner and Pergel \cite{felsner_pergel_2013}. Sorting networks and related structures remain influential in algorithm design and parallel computation \cite{valsalam_miikkulainen_2013}.

Beyond its computational origins, permutation avoidance has rich ties to algebraic geometry through Schubert varieties, to representation theory via Kazhdan–Lusztig polynomials, and to algebraic combinatorics through the work of Hamaker, Pawlowski, and Sagan \cite{hamaker_pawlowski_sagan_2020}, Bloom and Sagan \cite{bloom_sagan_2020}, and Marmor \cite{marmor_2025}. 

In this paper, we investigate the asymptotic growth of certain families of pattern-avoiding permutations and show that these classes grow only polynomially in size. These results provide new examples of avoidance families with unusually slow growth and address a long-standing question about the growth rates of permutation classes. We next review the relevant background and introduce the necessary definitions.

Denote $[n] = \{1, 2, \dots, n\}$. A permutation of length $n$ is called an $n$-permutation. We begin with a simple definition.

\begin{definition}
    An $n$-permutation $\sigma$ \textit{contains} a $k$-permutation $\pi$ if there exist integers $1 < x_1 < x_2 \dots < x_k < n$ such that for all $0 \leq i,j\leq k$ we have $\sigma(x_i) <\sigma(x_j)$ if and only if $\pi(x_i) < \pi(x_j)$. Otherwise, $\sigma$ avoids $\pi$.
\end{definition}

In other words, $\sigma$ contains $\pi$ if and only if there is a subsequence in $\sigma$ with the same relative order as $\pi$. For example, the permutation $A = 42153$ contains $B=312$ but avoids $C=123$.

\begin{definition}
    $\text{Av}(\pi)$ is the class of all permutations that avoid $\pi$, and $\text{Av}_n(\pi)$ is the class of all $n$-permutations that avoid $\pi$.
\end{definition}

The size of these avoidance classes has been of interest. As mentioned earlier in the introduction, these avoidance classes link the theory of permutations to algebraic geometry and representation theory. Furthermore, the size of special avoidance classes have connections to classical objects in combinatorics and theoretical computer science: the classes $\text{Av}(123)$ and $\text{Av}(132)$ correspond to Dyck paths and binary trees, the class $\text{Av}(231)$ corresponds to stack-sortable permutations first studied by Knuth \cite{knuth_1997}, and the avoidance class of all permutations that avoid both $2413$ and $3142$, denoted $\text{Av}(2413, 3142)$, correspond to large Schröder numbers.  
A famous result by Knuth \cite{knuth_1997} shows that $|\text{Av}_n(\pi)| = \frac{1}{n+1}\binom{2n}{n}$ for any $3$-permutation $\pi$. 

In 1980, Stanley and Wilf conjectured that the {\it Stanley-Wilf limit} \[L(\pi)=\lim_{n \to \infty} (|\text{Av}_n(\pi)|)^{1/n}
\] is finite. The first breakthrough in this conjecture was in 2000 by Klazar \cite{klazar_2000}, when he showed that the Furedi-Hajnal conjecture \cite{furedi_hajnal_1992} on the extremal function of permutation matrices, as described below, implies the Stanley-Wilf conjecture. 

\begin{definition}
    A binary matrix is a matrix with all entries either $0$ or $1$, and we call it a \textit{permutation matrix} if it has exactly one 1-entry per row and per column. A binary matrix $A$ \textit{contains} a $k \times l$ binary matrix $P$ if a submatrix $B$ of $A$ with dimensions $k \times l$ exists such that $B(i, j) = 1$ if $P(i, j) = 1$. Otherwise, $A$ \textit{avoids} $P$. 
\end{definition}

\begin{definition}
    The extremal function, $\text{ex}_P(n)$, of a binary matrix $P$ is the maximum number of ones in an $n \times n$ binary matrix avoiding $P$. 
\end{definition}

The Furedi-Hajnal conjecture hypothesized that for each permutation matrix $P$, we have $\text{ex}_P(n) = O(n)$. 
In 2004, Marcus and Tardos \cite{marcus_tardos_2004} proved this conjecture, particularly showing the following:

\begin{thm}[Marcus-Tardos Theorem \cite{marcus_tardos_2004}]
    For any $k$-permutation matrix $P$, we have $$\mathrm{ex}_P(n) \leq 2k^4 \binom{k^2}{k}n$$
\end{thm}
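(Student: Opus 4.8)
The plan is to prove the stronger claim $\mathrm{ex}_P(n)\le 2k^4\binom{k^2}{k}n$ by strong induction on $n$, through a divide-and-conquer argument on a block partition. Fix a $k$-permutation matrix $P$ and let $\pi\in S_k$ be the permutation with $P(i,\pi(i))=1$ for all $i$ (the case $k=1$ being trivial, assume $k\ge 2$). Let $M$ be an $n\times n$ binary matrix avoiding $P$ with the largest possible number of $1$-entries, and cut $M$ into a $t\times t$ array of sub-blocks, each made of $k^2$ consecutive rows and $k^2$ consecutive columns, allowing the last block-row and block-column to be thinner if $k^2\nmid n$; here $t=\lceil n/k^2\rceil$.

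Call a non-empty sub-block \emph{wide} if its $1$-entries occupy at least $k$ distinct columns, \emph{tall} if they occupy at least $k$ distinct rows, and \emph{small} otherwise, so that a small block has at most $(k-1)^2$ ones. The crux is the counting estimate: every block-column contains at most $(k-1)\binom{k^2}{k}$ wide blocks, and, symmetrically, every block-row contains at most $(k-1)\binom{k^2}{k}$ tall blocks. To prove the first statement, suppose some block-column had more wide blocks than that, and assign to each wide block in it a fixed $k$-element subset of the $\le k^2$ relative column indices that carry $1$-entries. By pigeonhole, $k$ of these wide blocks $B_1,\dots,B_k$, sitting in block-rows $i_1<\dots<i_k$ of that block-column, share a common assigned set $\{c_1<\dots<c_k\}$. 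For each $j$ pick a $1$-entry of $B_j$ in relative column $c_{\pi(j)}$: since the blocks are vertically disjoint and ordered, the row coordinates of these $k$ entries strictly increase with $j$, while, as the entries all lie in one block-column with $c_1<\dots<c_k$, their column coordinates have relative ranks $\pi(1),\dots,\pi(k)$; thus the $k$ entries form a copy of $P$ in $M$, a contradiction. The tall case follows by transposing.

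It remains to tally the ones of $M$. Wide and tall blocks number at most $2t(k-1)\binom{k^2}{k}$ in all, and each carries at most $k^4$ ones, so together they account for at most $2t(k-1)\binom{k^2}{k}k^4=2(k-1)k^2\binom{k^2}{k}n$ ones. For the rest, form the $t\times t$ matrix $M'$ with a $1$ in each position whose sub-block is non-empty; picking one $1$-entry from each block shows, by the same disjointness observation, that a copy of $P$ in $M'$ would lift to a copy of $P$ in $M$, so $M'$ avoids $P$ and therefore has at most $\mathrm{ex}_P(t)$ ones. In particular there are at most $\mathrm{ex}_P(t)$ non-empty, hence at most $\mathrm{ex}_P(t)$ small, blocks, contributing at most $(k-1)^2\mathrm{ex}_P(t)$ ones. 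Adding up yields $\mathrm{ex}_P(n)\le 2(k-1)k^2\binom{k^2}{k}n+(k-1)^2\,\mathrm{ex}_P(\lceil n/k^2\rceil)$. Since $(k-1)^2<k^2$, this recursion is contracting; unrolling it (with the trivial base case $\mathrm{ex}_P(m)\le m^2$ for $m\le k(k-1)^2$) sums a geometric series of ratio $(k-1)^2/k^2$ and, for $n\ge k(k-1)^2$, the choice $C=2k^4\binom{k^2}{k}$ has exactly enough slack over the asymptotic value $\frac{2(k-1)k^4}{2k-1}\binom{k^2}{k}<k^4\binom{k^2}{k}$ to absorb the $+1$ coming from $\lceil n/k^2\rceil$ at every level, closing the induction.

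The main obstacle is the counting estimate in the second paragraph — and, conceptually, the realization that wide blocks must be counted \emph{per block-column} (and tall blocks per block-row): it is precisely the vertical alignment of $k$ wide blocks inside a single block-column that lets one select one entry from each block so that, collectively, they spell out the arbitrary target permutation $\pi$. The remaining steps (the lifting property of $M'$, and solving the recursion while tracking the rounding) are routine once this estimate is in place.
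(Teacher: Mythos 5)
Your proof is correct and follows the same approach as the paper's review of the Marcus--Tardos argument: partition the $n\times n$ matrix into $k^2\times k^2$ blocks, classify them as wide/tall/small, bound the number of wide (resp.\ tall) blocks per block-column (resp.\ block-row) via pigeonhole on $k$-subsets of occupied columns (resp.\ rows), contract non-empty blocks to a smaller $P$-avoiding matrix to handle small blocks, and solve the resulting recursion. Your constants (the per-block-column bound $(k-1)\binom{k^2}{k}$ and the ceiling-absorbing slack) are tracked more carefully than in the paper's brief sketch, but the method and final bound $2k^4\binom{k^2}{k}n$ are the same.
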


By the Marcus-Tardos theorem, we can define the constant \[c_P =\lim_{n \to \infty} \frac{\text{ex}_P(n)}{n},\] commonly  known as the \textit{Furedi-Hajnal limit} of $P$, also referred to as the extremal constant of $P$. We will use this interchangeably with $c(\pi)$ where $\pi$ is the permutation represented by permutation matrix $P$. 

From Klazar's \cite{klazar_2000} equivalence, it was known that $L(\pi)=2^{O(c(\pi))}$, which shows the doubly-exponential bound $L(\pi) \leq 15^{2k^4 \binom{k^2}{k}}$, proving the Stanley-Wilf conjecture. More recently,  Cibulka \cite{cibulka_2009} showed these two limits are polynomially related, i.e., \[L(\pi) = O(c(\pi)^2).\] 

Since the result of Marcus and Tardos, many efforts have been focused on finding bounds for $L(\pi)$. It was conjectured by many prominent researchers after 2004 that $L(\pi)=\Theta(k^2)$ for every $k$-permutation $\pi$. It was also widely believed that a \textit{layered} permutation, one that is formed by a concatenation of decreasing sequences where the numbers in each sequence are smaller than the numbers in the following sequences, had the largest Stanley-Wilf limit. It was also shown by Claesson, Jelinek, and Steingrimsson \cite{claesson_jelinek_steingrimsson_2011} that $L(\pi) \leq 4k^2$ for layered $k$-permutations $\pi$, so it would follow from the belief that the Stanley-Wilf limit is indeed quadratic in the size of $k$. 

However, in 2013, Fox \cite{fox_2011} disproved these conjectures, showing that for each $k$ there is a $k$-permutation $\pi$ with $L(\pi)=2^{\Omega(k^{1/4})}$. Furthermore, he even showed that asymptotically almost all $k$-permutations $\pi$ have $L(\pi)$ exponential in $k$. 

Demonstrating for which classes of permutations there are subexponential bounds on Stanley-Wilf limits has been one of the major interests:

\

\noindent\textbf{Main Open Question.} 
For which families of permutations $\mathcal{P}$ is the Stanley–Wilf limit of $\text{Av}(\pi)$, for $\pi \in \mathcal{P}$, polynomial in the size of the pattern $\pi$?

\

This question has implication in the query complexity of permutation property testing (see e.g., \cite{fox_wei_2018} and \cite{fox_wei_2017}). Property testing is a prominent field of theoretical computer science that asks for algorithms that can efficiently distinguish, using few queries, whether a combinatorial object (a permutation in this case) satisfies a property or is far from satisfying it. Variations of this main open question, particularly when analyzing the growth rate of $\text{Av}_n(\prod)$ where $\prod$ is a \textit{set} of forbidden permutations, have been thoroughly studied in \cite{simion_schmidt_1985}, \cite{west_1996}, and \cite{albert_atkinson_2007} where a full categorization of polynomial growth has been shown as long as $|\prod| \geq 2$. However, in the case of $|\prod|=1$, namely when we are dealing with a principal class, very few families are known to have polynomial (even subexponential) Stanley-Wilf limits. This case has been of significant interest as mentioned in \cite{cibulka_kyncl_2017}. 

We will first briefly introduce a principal class that is known to have a polynomial Stanley-Wilf limit. The direct sum and skew sum operations are crucial, as defined below. 

\begin{definition}
    For two permutation matrices $P_1$ and $P_2$ of arbitrary size, the {\it direct sum} $P_1 \oplus P_2$ is the block matrix $\begin{bmatrix}
        P_1 & 0 \\
        0 & P_2 \\
    \end{bmatrix}$, whereas the {\it skew sum} $P_1 \ominus P_2$ is the block matrix $\begin{bmatrix}
        0 & P_1 \\
        P_2 & 0 \\
    \end{bmatrix}$.
\end{definition}

One example of a specific family with a polynomial Stanley-Wilf limit is all permutations $P$ if they can be formed by a series of direct \textit{and} skew sums of smaller identity matrices. This is an easy generalization of layered matrices, which require that the identities can only be concatenated through one of direct sum \textit{or} skew sum.

We briefly review the proof of this generalization, mainly built off of the proof for layered permutations from Claesson, Jelinek, and Steingrimsson \cite{claesson_jelinek_steingrimsson_2011}. In this proof, a key definition is that of the \textit{merge} of two permutation classes.
 The \textit{merge} of two permutation classes $C$ and $D$, namely $C \odot D$, is the class of all permutations whose entries can be colored red and blue so that the red subsequence is order isomorphic to a permutation $C_1 \in C$ and the blue subsequence is order isomorphic to a permutation $D_1 \in D$.

It has been shown by Albert \cite{albert_pantone_vatter_2016} that the growth rate of a permutation class formed by a merge  can be bounded nicely as follows. 

\begin{lem}\label{lem:albert}
    $$\mathrm{gr}(C \odot D) \leq \left(\sqrt{\mathrm{gr}(C)}+\sqrt{\mathrm{gr}(D)}\right)^2$$
\end{lem}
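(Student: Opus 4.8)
The plan is to bound the number of $n$-permutations in $C\odot D$ by first choosing which positions are colored red versus blue, and then independently choosing the red pattern from $C$ and the blue pattern from $D$. Concretely, if $|C_m|$ and $|D_m|$ denote the number of permutations of length $m$ in each class, then every permutation of length $n$ in $C\odot D$ arises (possibly with multiplicity) by selecting a subset $S\subseteq[n]$ of size $r$ to host the red entries, selecting a red permutation of length $r$ in $C$, and a blue permutation of length $n-r$ in $D$, and then interleaving; the interleaving is determined by $S$ together with which of the chosen values go to red versus blue positions, so the crude count
\begin{equation}
|(C\odot D)_n|\le \sum_{r=0}^{n}\binom{n}{r}^2 |C_r|\,|D_{n-r}|
\end{equation}
holds, where one factor of $\binom{n}{r}$ picks the red positions and the other picks the red values. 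The goal is then to extract the exponential growth rate from this sum.

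The key step is the asymptotic analysis of that sum. Write $\gamma = \mathrm{gr}(C)$ and $\delta = \mathrm{gr}(D)$, so that $|C_r| \le \gamma^{r(1+o(1))}$ and $|D_s|\le \delta^{s(1+o(1))}$; more precisely, for any $\varepsilon>0$ there is a constant so that $|C_r|\le (\gamma+\varepsilon)^r$ and $|D_s|\le(\delta+\varepsilon)^s$ for all $r,s$ (using that growth rates of permutation classes exist, or just working with $\limsup$). Substituting and using $\binom{n}{r}\le \binom{n}{r}$, the sum is at most
\begin{equation}
\sum_{r=0}^{n}\binom{n}{r}^2 (\gamma+\varepsilon)^r (\delta+\varepsilon)^{n-r}.
\end{equation}
Since $\binom{n}{r}^2 = \binom{n}{r}\binom{n}{n-r}$, I would compare this to $\left(\sum_r \binom{n}{r}\sqrt{(\gamma+\varepsilon)^r}\,\sqrt{(\delta+\varepsilon)^{n-r}}\cdot(\text{something})\right)$; cleaner is to use the Cauchy–Schwarz inequality:
\begin{equation}
\sum_{r=0}^n \binom{n}{r}^2 a^r b^{n-r} \le \left(\sum_{r=0}^n \binom{n}{r} a^r\right)^{1/2}\!\!\left(\sum_{r=0}^n \binom{n}{r} b^{n-r}\right)^{1/2}\!\!\left(\sum_{r=0}^n \binom{n}{r} a^r b^{n-r}\right)^{0}
\end{equation}
is not quite it either. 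The clean route: apply Cauchy–Schwarz to the inner product $\sum_r \left(\binom{n}{r} a^{r/2}b^{(n-r)/2}\right)\cdot\left(\binom{n}{r} a^{r/2}b^{(n-r)/2}\right)$ — i.e.\ directly $\sum_r \binom{n}{r}^2 a^r b^{n-r} = \sum_r \left(\binom{n}{r} a^{r/2} b^{(n-r)/2}\right)^2 \le \left(\sum_r \binom{n}{r} a^{r/2} b^{(n-r)/2}\right)^2 = (a^{1/2}+b^{1/2})^{2n}$ by the binomial theorem, since all terms are nonnegative so the sum of squares is at most the square of the sum. Taking $a = \gamma+\varepsilon$, $b=\delta+\varepsilon$ gives $|(C\odot D)_n| \le \left(\sqrt{\gamma+\varepsilon}+\sqrt{\delta+\varepsilon}\right)^{2n}$, hence $\mathrm{gr}(C\odot D) \le \left(\sqrt{\gamma+\varepsilon}+\sqrt{\delta+\varepsilon}\right)^2$, and letting $\varepsilon\to 0$ finishes the proof.

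I expect the main obstacle to be the bookkeeping in the counting inequality rather than the analysis: one must be careful that choosing the red position set $S$ and the red value set (both of size $r$) together with the abstract red and blue patterns genuinely recovers every merged permutation, and that this only over-counts (never under-counts). A subtle point is that the same permutation in $C\odot D$ may admit many valid red/blue colorings, but since we only need an upper bound this over-counting is harmless. A second minor point is justifying the uniform bounds $|C_r|\le(\gamma+\varepsilon)^r$: this follows from the definition of $\mathrm{gr}$ as $\limsup |C_r|^{1/r}$ together with the fact that $|C_r|$ is finite for each $r$, so only finitely many terms could violate the bound and the constant can absorb them — though if one wants to avoid absorbing constants entirely, it is cleanest to carry the $\limsup$ through the Cauchy–Schwarz step directly, taking $n$th roots and then $\limsup_{n\to\infty}$, which commutes with the finite operations involved.
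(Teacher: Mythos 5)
The paper does not actually prove this lemma; it cites it as a result of Albert, Pantone, and Vatter \cite{albert_pantone_vatter_2016}, so there is no in-paper proof to compare against. Judged on its own, your argument is correct and is essentially the standard proof of this fact from the literature. The counting bound $|(C\odot D)_n|\le\sum_{r}\binom{n}{r}^2|C_r||D_{n-r}|$ is right: a merged permutation is recovered from the set $S$ of red positions, the set $V$ of red values, the red pattern in $C_r$, and the blue pattern in $D_{n-r}$, and this map is onto $(C\odot D)_n$ (with possible multiplicity, which is harmless for an upper bound). The key inequality you settled on, $\sum_r \bigl(\binom{n}{r}a^{r/2}b^{(n-r)/2}\bigr)^2\le\bigl(\sum_r\binom{n}{r}a^{r/2}b^{(n-r)/2}\bigr)^2=(\sqrt a+\sqrt b)^{2n}$, is exactly the sum-of-squares bound (not strictly Cauchy--Schwarz, as you noticed mid-stream), and the $\varepsilon$-bookkeeping at the end is the right way to pass from $|C_r|\le(\gamma+\varepsilon)^r$ to $\mathrm{gr}(C\odot D)\le(\sqrt\gamma+\sqrt\delta)^2$.

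One small tidy-up: the uniform bound $|C_r|\le(\gamma+\varepsilon)^r$ for \emph{all} $r$ is not automatic from $\limsup|C_r|^{1/r}=\gamma$; you need a constant prefactor, $|C_r|\le K_\varepsilon(\gamma+\varepsilon)^r$, absorbing the finitely many exceptional $r$, and similarly for $D$. The extra factor $K_\varepsilon K'_\varepsilon$ disappears after the $n$th root, so this is cosmetic, but as written the sentence overstates what the $\limsup$ gives. The aborted digression involving an exponent of $0$ should simply be cut; the direct sum-of-squares route you arrive at immediately afterwards is complete and correct.
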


Therefore, if $\text{Av}(\pi)$ can be written as a merge of two families (in which case this avoidance class is called \textit{splittable}), then we can apply Albert's lemma, to reduce the growth rate of $\text{Av}(\pi)$ to the two families. However, this is typically quite challenging. The most general theorem is a lemma by Jelinek and Valtr \cite{jelinek_valtr_2015}, which states that for all nonempty permutations $A,B,$ and $C$, we have 

\begin{equation} \label{oplus_ineq}
    \text{Av}(A \oplus B \oplus C) \subseteq \text{Av}(A \oplus B) \odot \text{Av}(B \oplus C)
\end{equation}
The same condition holds if we replace all $\oplus$ by $\ominus$. 

Therefore if $\pi$ can be obtained by a series of direct sum or skew sum of permutations $\pi_i$, then we can analyze $\text{Av}(\pi)$ by the growth rates of $\text{Av}(\pi_i)$. A corollary from \cite{vatter_2015} also tells us that the growth rate of any layered permutation $\beta$ of length $k$ and at most two layers, which recall is the permutation formed by a concatenation of at most two decreasing sequences where the numbers in the first sequence are smaller than the numbers in the second sequence, is $\text{Av}(\beta)=(k-1)^2$. By induction (using the mentioned base case and the previously mentioned inductive step), this gives us a polynomial bound for all permutations formed by a combination of direct and skew sum of {\it identity matrices.}  

However, this method doesn't generalize to a large category of permutations because it is highly specific to the case of direct/skew sum and does not easily extend. In fact, it will not even easily extend to the case where we have direct/skew sums of \textit{non-identity} matrices; this is because even though Condition \ref{oplus_ineq} will remain true, we will no longer have a base case to rely on for $\text{Av}(A \oplus B)$. More generally – even beyond the case of direct/skew sums – as noted in Vatter's survey \cite{vatter_2015}, the difficulty is that is that it is hard to say much about the splittability of classes such as $\text{Av}(\pi)$ where $\pi$ is not a permutation obtained by direct/skew sums.  

In summary, whenever $\text{Av}(\pi)$ is known to possess a suitable decomposition, such as a direct or skew-sum splitting, Albert’s bound (Lemma \ref{lem:albert}) applies. In contrast, determining whether $\text{Av}(\pi)$ is decomposable is, in general, a highly nontrivial problem, even for many structurally simple permutations $\pi$.

In this paper, we start by offering a new method that proves a polynomial Stanley-Wilf limit for a broad family of permutations that covers the generalization presented above and more, including those cases that are hard to tackle through splittability. We start with some definitions. 

\begin{definition}
    Given a permutation $\sigma$ of length $m$ and nonempty permutations $P_1,\dots,P_m$, respectively, the inflation of $\sigma$ by $(P_1,\dots,P_m)$, denoted $\sigma[P_1,\dots,P_m ]$ is the permutation of length $|P_1|+\dots+|P_m|$ obtained by replacing each entry $\sigma(i)$ by an interval that is order-isomorphic to $P_1$ such that the intervals themselves are order-isomorphic to $\sigma$. 
\end{definition}

\begin{definition}
    We call a permutation \textit{c-blockable} if it can be represented as an inflation $\sigma[P_1, \dots, P_c]$ for some permutation $\sigma$ and some  $P_1, \dots, P_c$. For a fixed set of permutation families $\mathcal{P}_1, \dots, \mathcal{P}_c$, let $B_{\{\mathcal{P}_1, \dots, \mathcal{P}_c\}}$ denote the family of all $c$-blockable permutations $\sigma[P_1, \dots, P_c]$ where $\sigma$ is any $c$-permutation and $P_i \in \mathcal{P}_i$.
\end{definition}

An example of an inflation is shown in Figure \ref{fig:partitioned-squares}, namely of the case $2413[1, 132, 321, 12]$. This is a 4-blockable permutation. Notice that this permutation cannot be written as a sequence of direct sum and skew sum of smaller permutations. 

\begin{figure}[htbp]
\centering
\begin{tikzpicture}[scale=0.55,line cap=round,line join=round]
  \draw[line width=2pt] (0,0) rectangle (9,9);

  \def\xA{0} 
  \def\xB{1} 
  \def\xC{4}  
  \def\xD{7} 

  \def\yPthree{0}        
  \def\yPone{3}         
  \def\yPfour{4}         
  \def\yPtwo{6}

  \filldraw[fill=gray!30, line width=1pt] (\xA,\yPone) rectangle ++(1,1);
  \filldraw[black] (\xA+0.5,\yPone+0.5) circle (0.15);

  \filldraw[fill=gray!30, line width=1pt] (\xB,\yPtwo) rectangle ++(3,3);
  \filldraw[black] (\xB+0.5,\yPtwo+0.5) circle (0.15);
  \filldraw[black] (\xB+1.5,\yPtwo+2.5) circle (0.15);
  \filldraw[black] (\xB+2.5,\yPtwo+1.5) circle (0.15);

  \filldraw[fill=gray!30, line width=1pt] (\xC,\yPthree) rectangle ++(3,3);
  \filldraw[black] (\xC+0.5,\yPthree+2.5) circle (0.15);
  \filldraw[black] (\xC+1.5,\yPthree+1.5) circle (0.15);
  \filldraw[black] (\xC+2.5,\yPthree+0.5) circle (0.15);

  \filldraw[fill=gray!30, line width=1pt] (\xD,\yPfour) rectangle ++(2,2);
  \filldraw[black] (\xD+0.5,\yPfour+0.5) circle (0.15);
  \filldraw[black] (\xD+1.5,\yPfour+1.5) circle (0.15);

  \foreach \x in {1,...,8} {\draw[gray!50,thin] (\x,0)--(\x,9);}
  \foreach \y in {1,...,8} {\draw[gray!50,thin] (0,\y)--(9,\y);}
\end{tikzpicture}

\caption{A $4$-blockable permutation}
\label{fig:partitioned-squares}
\end{figure}
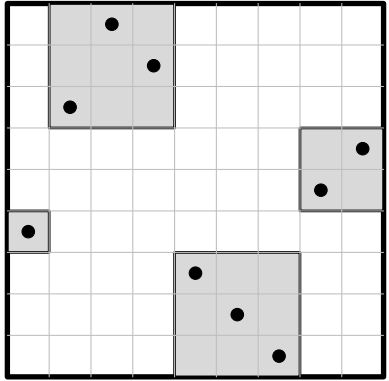

We now state our main result. Recall that for a permutation matrix $P$, we have $L(P) = L(\pi)$ where $\pi$ is the permutation corresponding to the one-entries in $P$.
\\

\begin{thm} \label{thm:mainresult}
    Consider some fixed positive integer $c$ and permutation families $\mathcal{P}_1, \dots, \mathcal{P}_c$. If, for all $1 \leq i \leq c$, the Stanley-Wilf limit of a permutation in $\mathcal{P}_i$ is polynomial in the size of the permutation, then for sufficiently large $k$ the Stanley-Wilf limit of a $k$-permutation in $B_{\{\mathcal{P}_1, \dots, \mathcal{P}_c\}}$ is polynomial in $k$. 

More precisely, if for all $1 \leq i \leq c$ and any permutation $P_i \in \mathcal{P}_i$, we have $L(P_i) \leq |P_i|^{a_i}$,  
let $a = \max_{1 \leq i \leq c} a_i$. Then, $$L(B_{\{\mathcal{P}_1, \dots, \mathcal{P}_c\}}) = O(k^{4a + 16c^2+64ac^2 \ln c});$$
\end{thm}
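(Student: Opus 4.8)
\medskip
\noindent\textit{Proof strategy.} The plan is to induct on $c$, peeling the outer permutation $\sigma$ into shorter pieces and recombining the growth-rate estimates with Albert's merge inequality (Lemma~\ref{lem:albert}). First reduce to a single outer pattern: there are only $c!$ permutations $\sigma$ of length $c$, and $L$ of a family is the supremum of $L$ over its members, so it suffices to bound $L\bigl(\sigma[P_1,\dots,P_c]\bigr)$ for each fixed $c$-permutation $\sigma$ and all $P_i\in\mathcal P_i$ in terms of $k=|P_1|+\dots+|P_c|$. Writing $\ell=\max_i|P_i|\le k$, the hypothesis gives $L(P_i)\le\ell^{a_i}\le k^{a}$ for every $i$. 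For $c=1$ the pattern is $P_1$ itself, so $L=L(P_1)\le k^{a}$; assume $c\ge2$.

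The inductive step rests on a decomposition of $\mathrm{Av}\bigl(\sigma[P_1,\dots,P_c]\bigr)$ into a merge of avoidance classes of \emph{strictly shorter} blockable permutations. If $\sigma=\alpha\oplus\beta$ or $\sigma=\alpha\ominus\beta$, then $\sigma[P_1,\dots,P_c]$ is a direct or skew sum of the two corresponding sub-inflations, and---after inserting a singleton block if necessary so that three summands are available---the Jelinek-Valtr inclusion~\eqref{oplus_ineq} places $\mathrm{Av}\bigl(\sigma[P_1,\dots,P_c]\bigr)$ inside a merge of two blockable avoidance classes of width at most $\lceil c/2\rceil$ and size at most $k$. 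The new ingredient is the case where $\sigma$ is simple, so no such split is available; here splittability of the principal class $\mathrm{Av}(\sigma[\dots])$ along $\sigma[\dots]$ itself is unavailable---exactly the obstruction flagged in the introduction---so I would instead work on the $c\times c$ grid cut out by $\sigma$. Via a Marcus-Tardos-style marking argument on this grid, the entries of any $\rho$ avoiding $\sigma[P_1,\dots,P_c]$ can be coloured with a number of colours controlled by a Furedi-Hajnal count of the shape $\binom{c^2}{c}$ for $\sigma$, so that each colour class avoids $\tau[P_{i_1},\dots,P_{i_{c'}}]$ for some sub-permutation $\tau$ of $\sigma$ of length $c'\le\lceil c/2\rceil$---again a shorter blockable pattern. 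Either way we obtain $\mathrm{Av}\bigl(\sigma[P_1,\dots,P_c]\bigr)\subseteq\mathcal C_1\odot\cdots\odot\mathcal C_N$ with $N=N(c)$ and each $\mathcal C_j$ a blockable avoidance class of width $\le\lceil c/2\rceil$ and size $\le k$.

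Applying Lemma~\ref{lem:albert} yields $\mathrm{gr}\bigl(\mathrm{Av}(\sigma[P_1,\dots,P_c])\bigr)\le\bigl(\sum_j\sqrt{\mathrm{gr}(\mathcal C_j)}\bigr)^2\le N(c)^2\max_j\mathrm{gr}(\mathcal C_j)$, and the inductive hypothesis bounds each $\mathrm{gr}(\mathcal C_j)$ by $k$ raised to the exponent already obtained at width $\lceil c/2\rceil$. Unwinding the $O(\log c)$ levels of the recursion, with the Marcus-Tardos colour count (whose logarithm is of order $c\ln c$) entering once per level and the base exponent $a$ feeding in at the leaves, produces a bound of the form $O\bigl(k^{\,O(a)+O(c^2)+O(ac^2\ln c)}\bigr)$; tracking the crude constants gives the stated exponent $4a+16c^2+64ac^2\ln c$, which is far from optimised. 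If it is more convenient to run the grid argument on $0$--$1$ matrices, one can pass between $\mathrm{ex}_{\sigma[\dots]}(n)$ and $L(\sigma[\dots])$ using Cibulka's polynomial equivalence of $c(\pi)$ and $L(\pi)$, absorbing another polynomial factor into the exponent.

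The main obstacle is the simple case of the decomposition lemma: exhibiting $\mathrm{Av}\bigl(\sigma[P_1,\dots,P_c]\bigr)$ inside a merge of shorter blockable classes when $\sigma$ is not sum-decomposable. Since deciding splittability of principal classes is open in general, the argument cannot split along $\sigma[\dots]$ itself; it must use that the pattern is an inflation of the \emph{bounded} outer pattern $\sigma$, carry out the Marcus-Tardos marking on the $c$-cell grid of $\sigma$, and verify both that the number of colours stays bounded in terms of $c$ and that each residual colour class is again blockable of smaller width, so that the induction closes.
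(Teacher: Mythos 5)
There is a genuine gap, and it is precisely the one you flag yourself at the end: the ``Marcus--Tardos-style marking argument'' for simple outer patterns is never carried out, and without it the proposed induction cannot close. What you are asking for there is, in effect, a new splittability result: that when $\sigma$ is simple, $\mathrm{Av}\bigl(\sigma[P_1,\dots,P_c]\bigr)$ sits inside a merge of avoidance classes of \emph{strictly shorter} blockable patterns, with the number of factors bounded in terms of $c$. Nothing in the Marcus--Tardos block-counting machinery produces that: their argument bounds the number of one-entries in a $\sigma$-avoiding $0$--$1$ matrix, it does not produce a colouring of a $\sigma[P_1,\dots,P_c]$-avoiding \emph{permutation} into colour classes each of which avoids a shorter inflation. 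The paper's introduction points at exactly this obstruction (``it is hard to say much about the splittability of classes such as $\mathrm{Av}(\pi)$ where $\pi$ is not a permutation obtained by direct/skew sums'') and the whole point of the paper's method is to \emph{not} go through splittability. So the central idea your proposal relies on is missing and is not a routine adaptation; it is an open question.

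The paper takes an entirely different route. It works with Fox's quantity $f_P(t,s)$ (the maximal number of rows of a $t$-column, $P$-avoiding $0$--$1$ matrix with $\ge s$ ones per row) and gives a new recursion for it, tailored to $c$-blockable $P$ (Lemma~\ref{lem: lemma2.2}): split the $t$ columns into $c$ vertical sections, classify rows as ``type~$1$'' (some $\lfloor xc\rfloor$ sections carry $\ge \lfloor ys\rfloor$ ones) or ``type~$2$'' (every section is dense), bound the type-$1$ rows via a $\binom{c}{\lfloor xc\rfloor} f_P(\cdot,\cdot)$ term, and bound the type-$2$ rows by threading the $c$ blocks $P_1,\dots,P_c$ through the $c$ sections using $f_{P_i}$, which by hypothesis is polynomially bounded (Lemma~\ref{lem: lemma2.1}). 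Iterating this with carefully chosen $(t_0,s_0,x,y)$ (Lemma~\ref{lem: lemma2.3}) gives $f_P(t,s)$ polynomial in $k$, which plugged into Fox's inequality~\eqref{eq:fox-ineq} and then Cibulka's $L(\pi)=O(c(\pi)^2)$ yields the stated exponent. This sidesteps merges and splittability entirely.

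Two secondary issues in your sketch: (i) the sum/skew-decomposable and simple cases are not exhaustive for a $c$-permutation $\sigma$ of length $\ge 4$---there are $\sigma$ that are inflations of a simple permutation of length $\ge 4$ without being simple or $\oplus$/$\ominus$-decomposable, so your case split is incomplete; and (ii) even in the decomposable case, inserting a singleton and applying Jelinek--Valtr~\eqref{oplus_ineq} gives factors like $\mathrm{Av}(\alpha[P_1,\dots,P_{c_1}]\oplus 1)$, whose block-width is $c_1+1$ and need not drop to $\lceil c/2\rceil$ unless the split point is chosen to land near the middle, which a general $\sigma=\alpha\oplus\beta$ does not guarantee. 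These are repairable, but the simple-case decomposition is the load-bearing gap.
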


We prove this result as a direct consequence of Theorem \ref{thm: thm2.4}, 
and we made no attempt to optimize the constant. 

\section{$c$-blockable permutations}
In this section, we prove Theorem \ref{thm:mainresult}. Before that, we will introduce the framework used in this proof and its origins.

To help understand our approach, we will first quickly review the framework of Marcus-Tardos' approach for their bound on the extremal function. Consider a $k \times k$ permutation matrix $P$ and an $n \times n$ matrix $A$ that avoids $P$. Say that $A$ has $\text{ex}_P(n)$ one-entries. We split matrix $A$ into smaller blocks of dimension $n/k^2$ by $n/k^2$. A block is called \textit{wide} if it has at least $k$ nonempty columns and \textit{tall} if it has at least $k$ nonempty rows. By a pigeonhole argument, it is easy to show that there cannot be more than $k\binom{k^2}{k}\frac{n}{k^2}$ wide blocks and same for the tall blocks. So, we cannot have more than $k^3 \binom{k^2}{k}n$ ones in total considering all wide and tall blocks. For the blocks that are neither wide nor tall, we can simply bound the number of ones as $(k-1)^2\text{ex}_P(n/k^2)$. Finally, we obtain the recursion $$\text{ex}_P(n) \leq (k-1)^2\text{ex}_P(n/k^2) + 2k\binom{k^2}{k}n$$which, when inducted on, gives the nice bound $\text{ex}_P(n) \leq 2k^4\binom{k^2}{k}n$.

In Fox's work where the constant was improved \cite{fox_2011}, it also follows this framework but with some extra clever ideas below. Consider a $k \times k$ permutation matrix $P$. For integers $s \leq t$, let $f_P(t,s)$ be the maximum possible $N$ such that there exists an $N \times t$ matrix with at least $s$ ones in each row that avoids $P$. Similarly, $g_P(t, s)$ is the horizontal analog of $f_P(t, s)$. Then, Fox showed
\begin{equation} \label{eq:fox-ineq}
    \text{ex}_P(tn) \leq \text{ex}_P(s-1)\text{ex}_P(n) + \text{ex}_P(t)(f_P(t, s) + g_P(t, s))n
\end{equation}

using an argument involving contracting the rows/columns to view the distribution of one-entries globally. After this inequality, Fox bounded $f_P(t, s) = g_P(t, s) \leq \frac{2^{k-1}t^2}{s}$ for any values of $s \leq t$ using the following method: for an $N \times t$ matrix $M$ that avoids the all ones $r \times k$ matrix $J_{r, k}$, split the columns into two sections, both of width $t/2$. The number of rows where either the first or last $t/2$ entries are empty is at maximum $2f_P(t/2, s)$, and the remaining rows have at least one one-entry in both the first and last $t/2$ entries, and these rows can be bounded by $2f_{J_{r, k-1}}(t/2, s/2)$.  This gives the recursion $f_{J_{r, k}} \leq 2f_{J_{r,k}}(t/2, s) + 2f_{J_{r, k-1}}(t/2, s/2)$, which can be inducted on to get $f_P(t, s) \leq r2^{k-1}t^2/s$. Lastly, Fox picked suitable values for $(t, s)$ and did induction on $(\star)$ to obtain $\text{ex}_P(n) \leq 3k(2^{8k})n$ for any $k$-permutation matrix $P$. 

Fox's approach is meant to work for all $k \times k$ permutations $P$, which is why in the recursion we avoid an all ones matrix. For $c$-blockable permutations, we will improve the bound on $f_P(t, s)$ for this family using a new recursive approach. In this approach, we will take advantage of the unique structure of blockable permutations. In turn, by Inequality (\ref{eq:fox-ineq}), this improves the bound on the extremal constant for permutations in this family to polynomial in $k$. 

Before we can bound $f_P(t, s)$, we will need to show that the pre-condition of Theorem \ref{thm:mainresult}, namely that the extremal functions of certain permutation matrices are polynomially bounded in $k$, directly implies that $f_P(t, s)$ for these same matrices is also polynomially bounded in $k$.

\begin{lem} \label{lem: lemma2.1}
Consider an arbitrary permutation matrix $P$ with Furedi-Hajnal limit (also known as the extremal constant) $k^a$ for a positive constant $a$. Then for any positive integers $s, t$ such that $s \leq t$ and $s \geq k^a$, we have $$f_P(t, s) \leq \frac{k^at}{s-k^a}$$
\end{lem}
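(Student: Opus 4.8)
The plan is to estimate $f_P(t,s)$ directly from the definition and the hypothesis that $\mathrm{ex}_P(m) \le k^a \cdot m$ for all $m$ (this is what the Furedi--Hajnal limit $c_P = k^a$ gives, up to the usual caveat that the limit bound holds for all $n$ when stated as an upper bound on $\mathrm{ex}_P$). Let $N = f_P(t,s)$, so there is an $N \times t$ binary matrix $M$ avoiding $P$ with at least $s$ ones in every row. The total number of ones in $M$ is therefore at least $Ns$. On the other hand, $M$ is an $N \times t$ matrix avoiding $P$; embedding it into a square matrix of side $\max(N,t)$ and applying the extremal bound, the number of ones is at most $\mathrm{ex}_P(\max(N,t))$. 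The slightly delicate point is which of $N$ and $t$ is larger, so I would split into the two cases.

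First I would handle the case $N \le t$. Then the number of ones is at most $\mathrm{ex}_P(t) \le k^a t$, so $Ns \le k^a t$, giving $N \le k^a t / s \le k^a t / (s - k^a)$, which is even stronger than claimed. Next, the case $N > t$: embed $M$ into an $N \times N$ matrix, so the number of ones is at most $\mathrm{ex}_P(N) \le k^a N$. Combined with the lower bound $Ns \le k^a N$ we would get $s \le k^a$, contradicting $s \ge k^a$ unless $s = k^a$; to get a clean bound I instead argue more carefully by only using that each of the $N$ rows has $\ge s$ ones among $t$ columns, and count column-by-column or pad to width $t$ and use $\mathrm{ex}_P(N) \le k^a N$ together with the fact that the ones all lie in $t \le N$ columns — actually the cleanest route is: the ones of $M$ lie in an $N \times t$ region, and for any $N$ we may apply the extremal function in the larger dimension, so $Ns \le \mathrm{ex}_P(N) \le k^a N$ is too lossy. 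The right move is to tile: partition the $N$ rows into $\lceil N/t \rceil$ horizontal bands each of height at most $t$, apply the $N\le t$ case bound inside each band — no, simpler still — since in every row there are at least $s$ ones in only $t$ columns, deleting the $k^a$ columns is not quite it either.

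Let me restate the clean argument I would actually write: since $M$ is $N \times t$ and avoids $P$, for \emph{every} horizontal strip of $t$ consecutive rows the sub-strip is a $t \times t$ matrix avoiding $P$, hence has at most $k^a t$ ones; summing over a partition of the $N$ rows into $\lceil N/t\rceil$ strips (the last possibly shorter, only helping) gives at most $\lceil N/t\rceil k^a t \le (N/t + 1)k^a t = k^a N + k^a t$ ones. Comparing with the lower bound $Ns$ yields $Ns \le k^a N + k^a t$, i.e. $N(s - k^a) \le k^a t$, and since $s \ge k^a$ we may assume $s > k^a$ (the boundary case $s = k^a$ giving a vacuous or trivially-handled statement), so $N \le k^a t/(s - k^a)$, as desired.

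The main obstacle — really the only subtlety — is the bookkeeping around applying $\mathrm{ex}_P$ to non-square matrices and to the rightmost partial strip; the striping argument above sidesteps this by only ever invoking the extremal function on genuine $t \times t$ blocks (or smaller, where the bound is monotone). Everything else is a one-line pigeonhole comparison of the row-count lower bound $Ns$ against the strip-sum upper bound. I would make sure to note that $\mathrm{ex}_P$ is nondecreasing so that the shorter final strip contributes at most $k^a t$ as well, keeping the constant clean.
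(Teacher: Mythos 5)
Your proposal is correct and rests on the same core comparison as the paper: the row-sum lower bound $Ns$ on the ones versus an extremal upper bound. The paper disposes of the rectangular $r\times t$ shape in one step by padding to an $(r+t)\times(r+t)$ square, so the number of ones is at most $\mathrm{ex}_P(r+t)\le k^a(r+t)$, yielding $rs\le k^a(r+t)$ directly; your striping into $\lceil N/t\rceil$ horizontal bands of height at most $t$, each a genuine $t\times t$ (or smaller) block, reaches the identical inequality $Ns\le k^a N+k^a t$ while never invoking $\mathrm{ex}_P$ on a rectangle. Both are valid and give the same constant; the padding route is a touch shorter and is exactly what would have rescued your first attempt had you padded to side $N+t$ rather than $\max(N,t)$, avoiding the detour. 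When you write this up, trim the exploratory false starts and state the striping (or padding) bound cleanly, and do note explicitly, as you intend, that $\mathrm{ex}_P$ is nondecreasing so the short final strip still contributes at most $k^a t$.
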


\begin{proof}
    Note that since the extremal constant is $c_P = k^a$, we know that for all positive integers $m, n$, we have $\text{ex}_P(m+n) \leq k^a(m+n)$. 
    
    For the sake of contradiction, assume there exists a $P$-avoiding matrix $M$ of size $r \times t$ such that each row has at least $s$ ones, where $r > \frac{k^at}{s-k^a}$. Note that in order to avoid $P$, the maximum number of ones $M$ can have is $k^a(r+t)$. However, in $M$, note that the minimum number of ones in these $r$ rows is $rs > k^a(r+t)$, which is a contradiction. 

    Hence, we must have $f_P(t, s) \leq \frac{k^at}{s-k^a}$, as desired.
\end{proof} 

Now, in Lemma \ref{lem: lemma2.2}, we will use a new recursive approach to obtain a improved bound on $f_P(t, s)$ where $P$ is a $c$-blockable permutation using a general recursion that works for any pair of positive integers $(t, s)$. Then, in Lemma \ref{lem: lemma2.3}, we will pick specific values for $t$ and $s$ and evaluate the recursion in that case. This improved polynomial in $k$ bound on $f_P(t, s)$ will lead to a polynomial in $k$ bound on the extremal constant too.

\begin{lem} \label{lem: lemma2.2}
Let $P$ be a $c$-blockable permutation of size $k$. For any positive integers $t, s$ such that $t \geq s$ and constants $0 < x, y < 1$, we have $$f_P(t, s) \leq  \binom{c}{\lfloor xc \rfloor}f_P\left(\left\lfloor t \cdot \frac{\lfloor xc \rfloor}{c}\right\rfloor, \lfloor sy \rfloor\right) + \frac{k^at}{s\left(1 - y\left(\frac{c-1}{\lfloor xc \rfloor}\right)\right)(c) - k^ac}$$

\end{lem}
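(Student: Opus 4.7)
The plan is to adapt Fox's column-splitting strategy for $f_P(t,s)$: where Fox divides the $t$ columns in half and performs a two-way recursion, here I would partition the $t$ columns of a witness matrix $M$ into $c$ consecutive sections of width approximately $t/c$. The split into exactly $c$ sections mirrors the inflation structure $P=\sigma[P_1,\dots,P_c]$, and it will let us control the rows of $M$ by the way their ones are distributed across the $c$ sections. Write $N=f_P(t,s)$ for the number of rows of $M$, set $\alpha=\lfloor xc\rfloor$, and recall that any column-restriction of $M$ still avoids $P$.

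Classify each row of $M$ into one of two classes. Call a row of \emph{type (I)} if some $\alpha$-subset of the $c$ sections contains at least $\lfloor sy\rfloor$ ones of that row, and of \emph{type (II)} otherwise. Type (I) rows are handled by Fox-style pigeonholing: for each such row pick one witnessing $\alpha$-subset $S$, group the rows by their chosen $S$, and note that the restriction of each group to the columns inside $S$ forms a $P$-avoiding submatrix of width at most $\lfloor t\alpha/c\rfloor$ with at least $\lfloor sy\rfloor$ ones per row. Consequently each group has size at most $f_P(\lfloor t\alpha/c\rfloor,\lfloor sy\rfloor)$, and summing over the $\binom{c}{\alpha}$ choices of $S$ produces the first term of the recursion.

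The core of the argument is the type (II) case, which is what produces the unusual denominator in the second term. Fix a type-(II) row and any section $i$ with $n_i$ ones. Each of the $c-1$ other sections lies in exactly $\binom{c-2}{\alpha-1}$ of the $\binom{c-1}{\alpha}$ possible $\alpha$-subsets of the remaining sections, so the average number of ones over these subsets equals $\alpha(s-n_i)/(c-1)$. The maximum of these subset sums is at least the average, yet by the type-(II) hypothesis every $\alpha$-subset (including those avoiding $i$) has fewer than $sy$ ones. Rearranging gives $n_i>\tau$, where $\tau=s(1-y(c-1)/\alpha)$; in other words, in every type-(II) row \emph{each} of the $c$ sections contains more than $\tau$ ones. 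Restricting the type-(II) rows to a single section thus yields a $P$-avoiding submatrix of width $\lfloor t/c\rfloor$ with at least $\lceil\tau\rceil$ ones per row, and Lemma~\ref{lem: lemma2.1} (valid once $\tau\ge k^a$) bounds its number of rows by $\frac{k^a(t/c)}{\tau-k^a}=\frac{k^a t}{c\tau-ck^a}$, which is exactly the second term.

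The only genuinely new technical point is the averaging identity behind the type-(II) bound; the rest is bookkeeping of floors and the routine observation that every submatrix of $M$ still avoids $P$. I do not expect Lemma~\ref{lem: lemma2.2} itself to be the main obstacle of the paper—the harder step will be the subsequent choice of the parameters $x$ and $y$ that makes iterating this recursion yield a polynomial-in-$k$ bound on $f_P$, and, via Fox's inequality \eqref{eq:fox-ineq}, on the extremal constant $c_P$.
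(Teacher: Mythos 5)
Your setup (partition into $c$ column-sections, the type (I)/(II) dichotomy, the pigeonhole bound on type (I) rows, and the averaging identity showing that every section of a type (II) row carries more than $\tau = s\bigl(1 - y\tfrac{c-1}{\lfloor xc\rfloor}\bigr)$ ones) coincides with the paper's proof. But your treatment of the type (II) rows has a genuine gap, and it is exactly at the one point where the $c$-blockable structure must be used. You restrict the type (II) rows to a single column-section and invoke Lemma~\ref{lem: lemma2.1} for $P$ itself, which requires the F\"uredi--Hajnal limit of $P$ to be at most $k^a$. That is not a hypothesis: in this paper $a$ is the exponent controlling the \emph{blocks} $P_1,\dots,P_c$ of the inflation (via $L(P_i)\le |P_i|^{a_i}$ and $a=\max a_i$), while the extremal constant of the full permutation $P$ is precisely what Theorem~\ref{thm: thm2.4} is trying to bound --- and the bound eventually obtained is $k^{2a+8c^2+32ac^2\ln c}$, far larger than $k^a$. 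So your step is circular: the only a priori bound available for $c_P$ is the exponential Marcus--Tardos bound, and feeding that into your argument destroys the polynomial conclusion. Note also that your argument never actually uses that $P$ is blockable, which should be a warning sign, since the lemma is false in the intended generality without that hypothesis.

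The paper's argument for type (II) rows is different and is the heart of the lemma. Writing $P=\sigma[P_1,\dots,P_c]$ with block $P_i$ (ordered by height) sitting in column-section $j_i$ of $P$, one processes the type (II) rows in consecutive batches: since every section of every type (II) row has more than $\tau$ ones, the first $f_{P_1}(t/c,\tau)$ type (II) rows already force a copy of $P_1$ inside column-section $j_1$, the next $f_{P_2}(t/c,\tau)$ rows force $P_2$ inside column-section $j_2$, and so on; stacking these copies reassembles $P$ inside $M$. Hence the number of type (II) rows is at most $\sum_{i=1}^c f_{P_i}(t/c,\tau)$, and Lemma~\ref{lem: lemma2.1} is applied only to the blocks $P_i$, whose extremal constants \emph{are} assumed polynomial; summing the resulting bounds (using $\sum_i |P_i|^{a_i}\le k^a$) yields the stated second term. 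To repair your proof you need to replace the single-section application of Lemma~\ref{lem: lemma2.1} to $P$ with this block-stacking argument.
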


\begin{proof}
Consider a $P$-avoiding matrix $M$ with $t$ columns and at least $s$ ones per row. Partition the columns into $c$ vertical {\it sections} of equal width, discarding at most $\frac{t}{c}$ columns if needed. So roughly each vertical section has $t/c$ columns. Additionally, consider the constants $0 < x, y < 1$ whose relevance we will describe soon (see Fig \ref{fig:prooffig} for an example of this setup where $c=5$ and $x=0.8$). We will show that $M$ cannot contain more rows than the stated bound. 
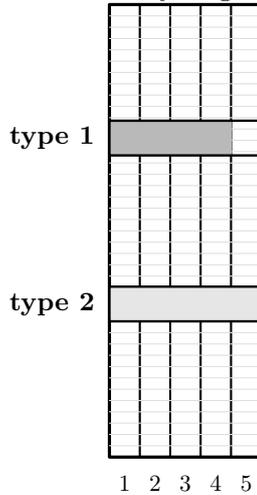
\begin{figure}[htbp]
\caption{Density Argument}
\label{fig:prooffig}
\centering
\begin{tikzpicture}[line cap=round,line join=round,>=latex, scale=0.2]

\def\W{10}
\def\H{30}
\def\c{5}
\def\seg{2}

\draw[line width=1.2pt] (0,0) rectangle (\W,\H);

\foreach \i in {1,...,4}{
  \draw[line width=0.8pt] (\i*\seg,0) -- (\i*\seg,\H);
}

\foreach \y in {0.75,1.5,...,29.25}{
  \draw[gray!25,line width=0.4pt] (0,\y) -- (\W,\y);
}

\def\yA{20.0}
\def\band{2.3}

\foreach \j in {0,1,2,3}{
  \fill[gray!55] (\j*\seg,\yA) rectangle ++(\seg,\band);
}
\draw[line width=0.9pt] (0,\yA) rectangle (\W,\yA+\band);
\node[anchor=east] at (-0.3,\yA+0.5*\band) {\small \textbf{type 1}};

 (\seg*4,\yA+\band+0.5);

\def\yB{9.0}

\foreach \j in {0,1,2,3,4}{
  \fill[gray!20] (\j*\seg,\yB) rectangle ++(\seg,\band);
}
\draw[line width=0.9pt] (0,\yB) rectangle (\W,\yB+\band);
\node[anchor=east] at (-0.3,\yB+0.5*\band) {\small \textbf{type 2}};

 (0,\yB-0.5) -- 
 (\W,\yB-0.5);

\foreach \j [evaluate=\j as \x using (\j-0.5)*\seg] in {1,...,\c}{
  \node[scale=0.8,anchor=north] at (\x,0) [below=4pt] {\j};
}
\end{tikzpicture}
\end{figure}

In this setup, each row $R$ is split into $c$ sections of equivalent width. We refer to these sections as 
$S_1(R), \dots, S_c(R)$, and we will often refer to whole row as the union set $S(R) = \{S_1(R), \dots, S_c(R)\}$. 

Call a row \textit{type $1$} if there exists a set of $\lfloor xc \rfloor$ (not neccessarily adjacent sections) of the $c$ sections in the row with at least $\lfloor ys \rfloor$ ones. Otherwise, the row is considered \textit{type $2$}. 

For a type $2$ row, note that in total we must have at least $s$ ones. However, since each set of $\lfloor xc \rfloor$ sections in this row has at most $\lfloor ys \rfloor$ ones, we know that each individual section must have at least \[s - sy \frac{\binom{c-1}{\lfloor xc \rfloor}}{\binom{c-2}{\lfloor xc \rfloor -1}} = s - sy\left(\frac{c-1}{\lfloor xc \rfloor}\right)\] ones.  This is because for each section, for the rest of the $c-1$ sections, the total number of ones the rest of the $c-1$ sections contain is at most $sy \frac{\binom{c-1}{\lfloor xc \rfloor}}{\binom{c-2}{\lfloor xc \rfloor -1}}$ by the definition of Type 2 and the fact that any $\lfloor xc \rfloor$ sections have less than $\lfloor ys \rfloor$ ones. 

Since every row in $M$ is either a type $1$ or a type $2$ row, we can bound the number of type $1$ rows and type $2$ rows separately under the constraint of avoiding $P$ and then their sum will be the final bound on $f_P(t, s)$. 

For each type $1$ row $R$, there are $\binom{c}{\lfloor xc \rfloor}$ distinct subsets of $S(R)$ with cardinality exactly $\lfloor xc \rfloor$ sections. Given that each type $1$ row must contain at least $\lfloor ys \rfloor$ ones in one of these subsets, an arbitrary subset cannot show up for more than $f_P(\frac{t}{c} \cdot \lfloor xc \rfloor, \lfloor sy \rfloor)$ type $1$ rows, otherwise $P$ will be contained by the definition of $f_P$. Hence, in total, we cannot have more than $\binom{c}{\lfloor xc \rfloor} f_P(\frac{t}{c} \cdot \lfloor xc \rfloor, \lfloor sy \rfloor)$ type $1$ rows.

Now, we only consider the rows of type $2$ in $M$, and let $M_1, \dots, M_c$ denote the submatrices formed by the $c$ vertical portions in $M$. Let $P_i$ be the $i^{\text{th}}$ block (by height) in the $c$-blockable matrix $P$. Each of these $P_i$ corresponds to a certain section $S_{j_i}(R)$.

Notice that $M_{j_i}$ will contain permutation matrix $P_i$, the $i^{\text{th}}$ block (by height) in the $c$-blockable matrix $P$, after $f_{P_i} \left(\frac{t}{c}, s\left(1 - y\left(\frac{c-1}{\lfloor xc \rfloor}\right)\right)\right)$ rows. Thus, we can have at most $$\sum_{i=1}^{c} f_{P_i}\left(\frac{t}{c}, s\left(1 - y\left(\frac{c-1}{\lfloor xc \rfloor}\right)\right)\right) \leq \left(\frac{\frac{t}{c}}{s\left(1 - y\left(\frac{c-1}{\lfloor xc \rfloor}\right)\right) - k^a}\right)(k^a)$$
type $2$ rows, because otherwise the first  $f_{P_1} \left(\frac{t}{c}, s\left(1 - y\left(\frac{c-1}{\lfloor xc \rfloor}\right)\right)\right)$ type $2$ rows will contain $P_1$ in $M_{j_1}$, the next  $f_{P_2} \left(\frac{t}{c}, s\left(1 - y\left(\frac{c-1}{\lfloor xc \rfloor}\right)\right)\right)$ type $2$ rows will contain $P_2$ in $M_{j_2}$, and so on until the last  $f_{P_c} \left(\frac{t}{c}, s\left(1 - y\left(\frac{c-1}{\lfloor xc \rfloor}\right)\right)\right)$ type $2$ rows containing $P_c$ in $M_{j_c}$, and hence $M$ would contain $P$. 

Given that each row of $M$ is either type $1$ or type $2$, and we have bounds for the number of type $1$ rows and the number of type $2$ rows, respectively, we know that
$$f_P(t, s) \leq  \binom{c}{\lfloor xc \rfloor}f_P\left(\left\lfloor t \cdot \frac{\lfloor xc \rfloor}{c}\right\rfloor, \lfloor sy \rfloor\right) + \frac{k^at}{s\left(1 - y\left(\frac{c-1}{\lfloor xc \rfloor}\right)\right)(c) - k^ac}$$ as desired.
\end{proof}

Now we iterate this recursion, letting the $t$-value and $s$-value after $i$ steps of the recursion be $t_i$ and $s_i$, respectively, the initial values being $(t_0, s_0)$. Ultimately, we would like this recursion to reach a final stage of $(t_{\text{final}}, s_{\text{final}}) = (\beta k \pm c_1, \beta k \pm c_2)$, where $\beta$ is a polynomial in $k$. However, there are three constraints that we must satisfy at each step of the recursion: we must have $t \geq s$ at each step for it to be logical, we must have $s\left(1-y\left(\frac{c-1}{\lfloor xc \rfloor}\right)\right) > k^a$ to maintain the constraint established by Lemma \ref{lem: lemma2.1}, and lastly we must have $x > \frac{1}{c}$ – otherwise, our definition for type 1 rows will be meaningless. 

In order to maintain these constraints while still ending the recursion at our desired values, we must be careful with the choice of our constants and the choice of $(t_0, s_0)$ here. We present a set of choices that satisfy all of these constraints.

First, set \[\beta = 2ck^{a-1}\] where $a$ is the constant from Lemma \ref{lem: lemma2.1}. Throughout the recursion, we intend to keep \[x = x_b = 1-\frac{1}{c}\] and \[y = y_b = 1-\frac{1}{2c}-\frac{1}{16c^2}\] for all of the steps other than the last two (here the subscript $b$ stands for ``beginning"). The steps other than the last two are called the \textit{bulk steps}, and we choose to have $R_A$ such steps. Accordingly, we will use $t_0 = \beta k\cdot x_{b}^{-\left(R_{A}+2\right)}$ and $s_0 = \sqrt{t_{0}}$. For the second to last step we will use a different $y$ value, namely $y_{1}$,   and the same $x$ value $x_b$. For the last step, we will simply use $x_b$ for both the $x$ and $y$ values. Throughout this process, we will also ignore the floor functions in the recursion from Lemma \ref{lem: lemma2.2}, later showing that they do not affect the order of magnitude of the bound: they only affecting the constants $c_1, c_2$ as shown in $(t_{\text{final}}, s_{\text{final}})$. We also set some milestones in the recursion for reference: let $(t_A, s_A)$ be the state after the first $R_A$ steps, let $(t_B, s_B)$ be the state after the next step, and let $(t_\text{final}, s_{\text{final}})$ be the state after that which is final.

\begin{lem} \label{lem: lemma2.3}
    Let $$R_A = \left \lceil 1+\frac{\ln\left(c\sqrt{\beta k}\right)}{\ln\left(\frac{y_b}{\sqrt{x_b}}\right)} \right \rceil$$ and let $y_1 = \frac{\beta k}{x_b s_A}$. If $(t_0, s_0) = (\beta k \cdot x_b^{-(R_A+2)}, \sqrt{t_0})$ as defined above, then $$f_P(t_0, s_0) \leq C_1k^{C_2}$$ for constants $C_1$ and $C_2$ in terms of $c$ and $a$.
\end{lem}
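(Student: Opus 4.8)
The plan is to iterate the recursion from Lemma~\ref{lem: lemma2.2} for $R_A$ \emph{bulk steps} with the fixed parameters $x=x_b=1-\frac1c$ and $y=y_b=1-\frac1{2c}-\frac1{16c^2}$, then perform two cleanup steps, and finally invoke Lemma~\ref{lem: lemma2.1} to terminate. Concretely, one bulk step sends $(t,s)\mapsto\bigl(x_b t,\;y_b s\bigr)$ (ignoring floors) and contributes an additive error term of the form $\dfrac{k^a t}{s\bigl(1-y_b\frac{c-1}{\lfloor x_b c\rfloor}\bigr)c-k^a c}$. First I would compute that with these choices $1-y_b\frac{c-1}{\lfloor x_bc\rfloor}=1-y_b=\frac1{2c}+\frac1{16c^2}$, so the denominator of the error term is $s\bigl(\frac12+\frac1{16c}\bigr)-k^ac\ge \frac{s}{2}-k^ac$; since during the bulk phase $s\ge s_A\ge c\sqrt{\beta k}\gg k^a$ (using $\beta=2ck^{a-1}$ so $\beta k=2ck^a$, hence $\sqrt{\beta k}=\sqrt{2c}\,k^{a/2}$ which dominates $k^a$ only once divided appropriately — I will need to check this carefully, see below), this error term is $O(k^a t/s)$. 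Starting from $s_0=\sqrt{t_0}$, after $i$ bulk steps $t_i/s_i = x_b^i t_0/(y_b^i s_0)=(x_b/y_b)^i\sqrt{t_0}$, so each bulk-step error term is $\le k^a(y_b/\sqrt{x_b})^{-i}\cdot(\text{const})$ — wait, I need $t_i/s_i$, which is $(x_b/y_b)^i\sqrt{t_0}$, and $\sqrt{t_0}=\sqrt{\beta k}\,x_b^{-(R_A+2)/2}$; so the error at step $i$ is $\lesssim k^a (y_b/x_b)^i (\beta k)^{-1/2}x_b^{(R_A+2)/2}$, which I will want to sum over $i=0,\dots,R_A-1$ and show is polynomially bounded.

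The key computation is the choice of $R_A$: it is designed so that after $R_A$ bulk steps, $t_A = x_b^{R_A}t_0 = \beta k\cdot x_b^{-2}$ and $s_A = y_b^{R_A}s_0=y_b^{R_A}\sqrt{t_0}=y_b^{R_A}\sqrt{\beta k}\,x_b^{-(R_A+2)/2}=\sqrt{\beta k}\,x_b^{-1}(y_b/\sqrt{x_b})^{R_A}$. Since $R_A\ge 1+\frac{\ln(c\sqrt{\beta k})}{\ln(y_b/\sqrt{x_b})}$ and $y_b/\sqrt{x_b}>1$ (this inequality itself needs checking: $y_b^2 > x_b$, i.e. $(1-\frac1{2c}-\frac1{16c^2})^2 > 1-\frac1c$, which should hold for all $c\ge1$ by expanding), we get $s_A\ge \sqrt{\beta k}\,x_b^{-1}\cdot c\sqrt{\beta k} = c\beta k x_b^{-1}\ge c\beta k$. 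Then the second-to-last step uses $y=y_1=\frac{\beta k}{x_b s_A}$, which by the above is $\le \frac1{c}<1$, legitimate; this step sends $(t_A,s_A)\mapsto (x_b t_A, y_1 s_A)=(\beta k x_b^{-1},\; \beta k/x_b) = (t_B,s_B)$ — so $t_B=s_B=\beta k/x_b$, bringing $t$ and $s$ into the same polynomial-in-$k$ regime, with its own error term to bound. The final step uses $x=y=x_b$, giving $(t_{\text{final}},s_{\text{final}})=(x_b t_B, x_b s_B)=(\beta k,\beta k)$ (up to floors, $\beta k\pm c_1,\beta k\pm c_2$), and at this point I terminate by Lemma~\ref{lem: lemma2.1}: since $s_{\text{final}}=\Theta(\beta k)=\Theta(c k^a)\ge k^a$, $f_P(t_{\text{final}},s_{\text{final}})\le \frac{k^a t_{\text{final}}}{s_{\text{final}}-k^a}=O(1)$ — actually $O(\beta k/(\beta k-k^a))$ which with $\beta=2ck^{a-1}$ is $O(1)$ for $c\ge1$.

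Putting it together: $f_P(t_0,s_0)$ is bounded by $\binom{c}{\lfloor x_bc\rfloor}^{R_A+2}$ times the terminal value $O(1)$, plus a telescoping sum of the error terms, each prefactored by the appropriate power of $\binom{c}{\lfloor x_bc\rfloor}=\binom{c}{c-1}=c$. So the dominant contribution is roughly $c^{R_A+2}\cdot\max(\text{error terms})$. The main work — and the step I expect to be the genuine obstacle — is bounding $R_A$ and hence $c^{R_A}$ as a polynomial in $k$: one has $R_A = O\!\bigl(1+\frac{\ln(c\sqrt{\beta k})}{\ln(y_b/\sqrt{x_b})}\bigr)$, and since $y_b/\sqrt{x_b}=1+\Theta(1/c^2)$, $\ln(y_b/\sqrt{x_b})=\Theta(1/c^2)$, giving $R_A=O(c^2\ln(c\sqrt{\beta k}))=O(c^2\ln(c\cdot\sqrt{ck^a}))=O(c^2(\ln c + a\ln k))$; therefore $c^{R_A}=k^{O(ac^2)}$, matching the $k^{64ac^2\ln c}$-type term in Theorem~\ref{thm:mainresult}. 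I will also need to verify that ignoring the floor functions throughout only perturbs $t_{\text{final}}, s_{\text{final}}$ by additive constants $c_1,c_2$ (not by multiplicative factors) — this follows because a floor loses at most $1$ each step and the $t$-values, being $\Theta(\beta k)$ near the end and larger before, are never driven below the scale where an additive $O(R_A)$ correction matters, though the cleanest way is to track $t_i+O(i)$ and $s_i+O(i)$ through the recursion. Finally, collecting $C_1$ from the $\binom{c}{\lfloor x_bc\rfloor}$-prefactors and constant-order terminal bound, and $C_2$ from the exponent accumulated in $c^{R_A}$ and the $(y_b/\sqrt{x_b})$-geometric error sum, yields $f_P(t_0,s_0)\le C_1 k^{C_2}$ with $C_1,C_2$ explicit in $c$ and $a$, as claimed.
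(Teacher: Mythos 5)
There is a genuine gap at the termination of your recursion. You propose to bound $f_P(t_{\text{final}},s_{\text{final}})$ by applying Lemma~\ref{lem: lemma2.1} to the big $c$-blockable permutation $P$ itself. But Lemma~\ref{lem: lemma2.1} is a conditional statement whose hypothesis is that the matrix in question has Füredi--Hajnal limit $k^a$; this hypothesis is available for the \emph{blocks} $P_1,\dots,P_c$ (and is used that way inside Lemma~\ref{lem: lemma2.2}), but for $P$ a polynomial Füredi--Hajnal bound is precisely what the whole argument is trying to establish. Invoking it here is circular. The paper instead closes the recursion with an unconditional pigeonhole bound: once $t_{\text{final}}$ and $s_{\text{final}}$ are of the same order, more than $k\binom{t_{\text{final}}}{s_{\text{final}}}$ rows would force $k$ rows with identical support of size $\ge s_{\text{final}}\ge k$, hence an all-ones $k\times k$ submatrix containing $P$; this gives $f_P(t_{\text{final}},s_{\text{final}})\le k\binom{\beta k}{\beta k-\frac{1}{1-y}}=k^{O(1)}$ without any assumption on $c_P$. (As a minor side note, even if you could apply the lemma, $\frac{k^a t_{\text{final}}}{s_{\text{final}}-k^a}$ would be $\Theta(k^a)$, not $O(1)$; you dropped the $k^a$ in the numerator.)

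The rest of your plan -- $R_A$ bulk steps with $(x_b,y_b)$, a cleanup step with $(x_b,y_1)$, a final step with $(x_b,x_b)$, ending at $(\beta k,\beta k)$, and bounding $c^{R_A}=k^{O(ac^2)}$ -- matches the paper. One further place where you wave your hands is the summation of the per-step error terms. You observe the geometric structure of $t_i/s_i$ and say you will "show it is polynomially bounded," but the paper actually needs to verify (via a $\frac{A_{j+1}}{A_j}\le 2$ computation using the specific $-\frac{1}{16c^2}$ correction built into $y_b$) that the accumulated error is dominated by a single $(2c)^{R_A+2}$ factor times the first term; without that check the $\frac{1}{16c^2}$ term in $y_b$ has no purpose in your argument, which is a sign that the estimate is not actually being carried through.
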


\begin{proof}
Before we show that our recursion gives this bound, we show that we satisfy all desired constraints. It is clear that $0 <x_b, y_b < 1$ by definition. Additionally, note that $y_1 < 1$ because $$y_1 = \frac{\beta k / x_b}{s_A} = \frac{\beta k / x_b}{s_0 y_b^{R_A}} = \frac{\beta k / x_b}{\beta k \, x_b^{-(R_A+2)} y_b^{R_A}} = x_b^{R_A+1} y_b^{-R_A},$$ and since $(\sqrt{x_b} / y_b)^{R_A - 1} \le \frac{1}{c \sqrt{\beta k}}$, we have $y_1 = x_b^2 \cdot (\sqrt{x_b}/y_b)^{2(R_A-1)} \le x_b^2 \cdot \frac{1}{c^2 \beta k^2} < 1$. In addition, note that $x_b > \frac{1}{c}$, which was one of our constraints.

Next, note that $s(1-y) > k^a$ at each recursion step. This is because for the bulk steps, $$s(1-y) \geq \beta k(1-y_b) > \beta k \cdot \frac{1}{2c}=k^a$$ and the condition is also true for the last two steps since $y_1 < y_b$. 

Now, we show that our recursion ends in the desired state. Note that after we run $R_A$ steps of the recursion using the constants $(x_b, y_b)$, our $t$ and $s$ values are $$(t_A, s_A) = \left(\beta kx_b^{-2}, \sqrt{\beta k}x^{-(R_A/2+1)}y^{R_A}\right)$$ Then, in the second to last step when we use the constants $(x_b, y_1)$, we have $$(t_B, s_B) = (\beta kx_b^{-1}, \beta kx_b^{-1})$$Lastly, after using the constants $(x_b, x_b)$, we achieve $(t_{\text{final}}, s_{\text{final}}) = (\beta k, \beta k)$. 

Now we will finally show our attained bound. We prove the bound by induction on the recursion steps. For each step $i$, let $(t_i, s_i)$ denote the $t$ and $s$ values at that step. The recursion is $$f_P(t_i, s_i) \le c \, f_P(t_{i+1}, s_{i+1}) + \frac{k^a t_i}{s_i (1-y_i) c - k^a c}$$ where $y_i = y_b$ for bulk steps and $y_i = y_1$ for second-to-last step. Also, $x=x_b$ is fixed throughout. 

For our induction hypothesis, we claim that for any step $i$, $$f_P(t_i, s_i) \le c^{R_A+2-i} f_P(t_{\text{final}}, s_{\text{final}}) + \sum_{j=i}^{R_A+1} c^{j-i} \frac{k^a t_j}{s_j (1-y_j) c - k^a c}$$ The base case is at the last recursion step, namely at $i=R_A+1$, where we have $$f_P(t_{R_A+1}, s_{R_A+1}) \le c f_P(t_{\text{final}}, s_{\text{final}}) + \frac{k^a t_{R_A+1}}{s_{R_A+1} (1-y_{R_A+1}) c - k^a c},
$$ which matches the induction hypothesis. Now, assume the bound holds for step $i+1$. Then by Lemma \ref{lem: lemma2.2}, we have $$f_P(t_i, s_i) \le c f_P(t_{i+1}, s_{i+1}) + \frac{k^a t_i}{s_i (1-y_i) c - k^a c}$$ and plugging in the inductive hypothesis for $f_P(t_{i+1}, s_{i+1})$ gives us 
\begin{align*}
f_P(t_i, s_i) &\le c \Bigg[c^{R_A+1-i} f_P(t_{\text{final}}, s_{\text{final}}) + \sum_{j=i+1}^{R_A+1} c^{j-(i+1)} \frac{k^a t_j}{s_j (1-y_j) c - k^a c} \Bigg] + \frac{k^a t_i}{s_i (1-y_i) c - k^a c} \\
&= c^{R_A+2-i} f_P(t_{\text{final}}, s_{\text{final}}) + \sum_{j=i}^{R_A+1} c^{j-i} \frac{k^a t_j}{s_j (1-y_j) c - k^a c}. 
\end{align*}
and this completes our induction. So, the bound will also hold for $i=0$ (the final step): 
\begin{equation}
f_P(t_0, s_0) \le c^{R_A+2} f_P(t_{\text{final}}, s_{\text{final}}) + \sum_{j=0}^{R_A+1} c^j \frac{k^a t_j}{s_j (1-y_j)c - k^a c} \label{eq:1}
\end{equation}
and thus, a crude bound is
\begin{equation} \label{eq:fpts_ineq}
    f_P(t_0, s_0) \le c^{R_A+2} f_P(t_{\text{final}}, s_{\text{final}}) + (2c)^{R_A+2} \frac{k^a t_0}{s_0 (1-y) c - k^a c}
\end{equation}

Note that we can directly reach this crude bound (\ref{eq:fpts_ineq}) because the second additive term in (\ref{eq:1}), which is $c^j \frac{k^a t_j}{s_j (1-y_j)c - k^a c}$, only at most doubles as the recursive steps progress. In the bulk steps, this is because if $A_j = \frac{k^at_j}{s_j(1-y_j)c - k^ac}$, and recall that $t_j = t_0x_b^j$  and $s_j = s_0y_b^j$, then $$\frac{A_{j+1}}{A_j} = \frac{x_b t_j}{y_b s_j (1-y_{b})c - k^a c} \cdot \frac{s_j (1-y_b)c - k^a c}{t_j} = \frac{x_b \bigl(s_j(1-y_b) - k^a\bigr)}{y_b s_j (1-y_{b}) - k^a} \le 2,$$  where the last inequality is true because it is equivalent to $(2y_b-x_b)s_j(1-y_b) \geq (2-x_b)k^a.$ Since $2y_b-x_b = 1-\frac{1}{8c^2}$ and $2-x_b=1+\frac{1}{c}$, this is equivalent to $$\left(1-\frac{1}{8c^2}\right)s_j\left(\frac{1}{2c}+\frac{1}{16c^2}\right) \geq k^a\left(1+\frac{1}{c}\right),$$
or equivalently $s_j \geq k^a \cdot \frac{128(c+1)c^3}{(8c^2-1)(8c+1)}$, which we know is true since $$s_j\geq \beta ky_b^{-2} \geq 2ck^a\left(1-\frac{1}{2c}-\frac{1}{16c^2}\right)^{-2} = \frac{512c^{5}}{\left(16c^{2}-8c-1\right)^{2}}k^a\geq \frac{128\left(c+1\right)c^{3}}{\left(8c^{2}-1\right)\left(8c+1\right)}k^a.$$
 For the second-to-last step we can show something even stronger as $$\frac{A_{R_A}}{A_0} = \frac{k^a t_{R_A}}{s_{R_A} (1-y_1)c - k^a c} \cdot \frac{s_0 (1-y_b)c - k^a c}{k^a t_0}  
= \frac{x_b^{R_A} (s_0 (1-y_b) - k^a / c)}{s_0 y_b^{R_A} (1-y_1) - k^a c} 
\le \frac{(x_b / y_b)^{R_A} (1-y_b)}{1-y_1} \le 1,
$$ 
where the last inequality is true because $y_1=\frac{\beta k}{s_Ax_b}=\frac{y_b^2}{x_b}$ and thus $1-y_1=\frac{x_b-y_b^2}{x_b} \geq \frac{1}{4c}$, also $1-y_b \leq \frac{1}{c}$, and hence $\frac{1-y_b}{1-y_1} \leq 4$. Additionally, $\ln(x_b/y_b)=\ln(1-\frac{y_b-x_b}{y_b}) \leq -\frac{y_b-x_b}{y_b} \leq -\frac{1}{3c}$, so $(x_b/y_b)^{R_A} \leq e^{-R_A/3c}$, and since $R_A \geq \frac{ac^2 \ln k}{2}$, this gives $(x_b/y_b)^{R_A} \leq k^{-ac/6}$. Combining we get $(x_b/y_b)^{R_A} \frac{1-y_b}{1-y_1} \leq 4k^{-ac/6} \leq 1$ as desired. 
Similar inequality holds for the last step $A_{R_A+1}$. Hence, every $A_j \leq A_0$.

Note that throughout this process, we have ignored the floor functions in our recursion. However, they are not hard to account for: notice that if we floor both the $t$ and $s$ values for each of our $R_A+2$ recursion steps, the $t_{\text{final}}$ value will be at most $\beta k$, and the $s_{\text{final}}$ value will be between $\beta k - \frac{1}{1-y}$ and $\beta k$. 
Observe that a crude bound for $f_P(t_{\text{final}}, s_{\text{final}})$ is $k\binom{t_\text{final}}{s_{\text{final}}}$ due to the Pigeonhole Principle – after these many rows, there must be $k$ rows in $M$ with ones in exactly the same $s_{\text{final}} > k$ columns.

Hence, 
$$f_P(t_0, s_0)\leq c^{R_A+2} \binom{t_{\text{final}}}{s_\text{final}} + (2c)^{R_A+2}\left(\frac{k^at_0}{s_0(1-y)(c)-k^ac}\right)$$ 

$$\leq c^{R_A+2}k\binom{\beta k}{\beta k - \frac{1}{1-y}} + (2c)^{R_A+2}\left(\frac{k^at_0}{s_0(1-y)(c)-k^ac}\right)$$

Note that $\frac{1}{1-y}$ is a constant, so $\binom{\beta k}{\beta k-\frac{1}{1-y}}=\binom{\beta k}{O(1)}=k^{O(1)}$. Since $R_A=O(\log k)$, we have $c^{R_A+2}=k^{O(1)}$, and the second term is also polynomial in $k$. Therefore $f_P(t_0,s_0)\le C_1 k^{C_2}$ for constants $C_1,C_2$.
\end{proof}

Recall Fox's definition of $g_P(t, s)$ as the horizontal analog of $f_P(t, s)$. We can interpret it as $f_{P'}(t, s)$ where $P'$ is a permutation matrix formed by rotating a $c$-blockable permutation matrix $P$. Since $P'$ itself is also a $c$-blockable permutation matrix, we now also know that $g_P(t, s) \leq C_1 k^{C_2}$.  

Now, we use this result for Fox's inequality to obtain a polynomial bound on the extremal number of all $c$-blockable permutations.

\begin{thm} \label{thm: thm2.4}
For a $c$-blockable $k$-permutation $P$, we have
    $$\mathrm{ex}_P(n) \leq k^{\alpha}n$$ where $\alpha = 2a+8c^2+32ac^2 \ln c$.
\end{thm}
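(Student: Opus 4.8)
The plan is to apply Fox's inequality (\ref{eq:fox-ineq}) with a carefully chosen pair $(t,s)$, using Lemma \ref{lem: lemma2.3} (and its rotated counterpart for $g_P$) to control the terms $f_P(t,s)$ and $g_P(t,s)$, and then to solve the resulting recursion for $\mathrm{ex}_P(n)$. Concretely, I would set $t = t_0 = \beta k\, x_b^{-(R_A+2)}$ and $s = s_0 = \sqrt{t_0}$, exactly the starting values of the recursion analyzed in Lemma \ref{lem: lemma2.3}, so that $f_P(t,s), g_P(t,s) \le C_1 k^{C_2}$. With these choices, (\ref{eq:fox-ineq}) reads
\[
  \mathrm{ex}_P(t_0 n) \;\le\; \mathrm{ex}_P(s_0-1)\,\mathrm{ex}_P(n) \;+\; \mathrm{ex}_P(t_0)\bigl(f_P(t_0,s_0)+g_P(t_0,s_0)\bigr)n.
\]
The first step is thus to record that both $f_P$ and $g_P$ are polynomially bounded: $g_P(t_0,s_0) \le C_1 k^{C_2}$ follows since rotating a $c$-blockable matrix yields another $c$-blockable matrix (the blocks are permuted but each $P_i$ stays in some $\mathcal{P}_i$, and Lemma \ref{lem: lemma2.1} only uses the extremal constant), so Lemma \ref{lem: lemma2.3} applies verbatim to $P'$.

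Next I would turn the displayed inequality into a clean recursion. Using the trivial bound $\mathrm{ex}_P(m) \le m^2$ (or even $\mathrm{ex}_P(m) \le km$ for small $m$, whichever is more convenient), one bounds $\mathrm{ex}_P(s_0-1) \le (k-1)$-type factors and $\mathrm{ex}_P(t_0)\le t_0^2$, giving something of the shape
\[
  \mathrm{ex}_P(t_0 n) \;\le\; \lambda\,\mathrm{ex}_P(n) + \mu\, n,
  \qquad \lambda = \mathrm{ex}_P(s_0-1) \approx s_0, \quad \mu = \mathrm{ex}_P(t_0)\,(f_P+g_P) \le t_0^2 \cdot 2C_1 k^{C_2}.
\]
Iterating this recursion $\log_{t_0} n$ times yields $\mathrm{ex}_P(n) \le \dfrac{\mu}{\,1 - \lambda/t_0\,}\, n$ provided $\lambda < t_0$, which holds comfortably since $\lambda \approx s_0 = \sqrt{t_0} \ll t_0$; hence $\mathrm{ex}_P(n) = O\!\left(\mu\, n\right)$. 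So the extremal constant $c_P$ is at most a constant multiple of $\mu = t_0^2 (f_P+g_P)$, and everything reduces to an explicit size estimate of $t_0^2 (f_P+g_P)$ in terms of $k$.

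The bulk of the remaining work — and the place I expect the bookkeeping to be heaviest — is extracting the exponent $\alpha = 2a + 8c^2 + 32ac^2\ln c$ from $t_0^2(f_P+g_P)$. Here $t_0 = \beta k\, x_b^{-(R_A+2)}$ with $\beta = 2ck^{a-1}$, $x_b = 1-\tfrac1c$, and $R_A = O(\log k)$; since $x_b^{-R_A} = e^{R_A \ln(c/(c-1))}$ and $\ln\tfrac{c}{c-1}\le \tfrac1{c-1}$, one gets $x_b^{-R_A} = k^{O(1/c \cdot \log(\cdots))}$, so $t_0 = k^{1 + O(1)}$ up to the precise constants, and $t_0^2 = k^{O(1)}$. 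Combined with $f_P+g_P \le 2C_1 k^{C_2}$ from Lemma \ref{lem: lemma2.3}, where $C_2$ is itself built from $a$, $c$, and the $R_A \ln c$ term, one collects all the polynomial-in-$k$ contributions. I would then just chase the constants through the chain $R_A \le 1 + \frac{\ln(c\sqrt{\beta k})}{\ln(y_b/\sqrt{x_b})}$, use $\ln(y_b/\sqrt{x_b}) = \Theta(1/c)$ and $\beta k = \Theta(k^a)$ to get $R_A \le O(a c^2 \ln k)$ with explicit constant, and verify that the dominant exponents sum to at most $2a + 8c^2 + 32ac^2\ln c$ (the $c^{R_A}$-type factors contribute the $32ac^2\ln c$ term via $c^{R_A} = k^{R_A \ln c/\ln k}$, the $t_0^2$ factor contributes the $2a$ and part of the $8c^2$, and slack is absorbed into $8c^2$). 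No attempt is made to optimize, so generous rounding at each step is fine; the only genuine constraint is that each inequality used (in particular $\lambda < t_0$ and the hypotheses $s\ge k^a$, $x_b>1/c$ of the earlier lemmas) was already verified in the setup preceding Lemma \ref{lem: lemma2.3}. Finally, Theorem \ref{thm:mainresult} follows by Cibulka's bound $L(P) = O(c_P^2) = O(k^{2\alpha})$, which gives the stated exponent $4a + 16c^2 + 64ac^2\ln c$.
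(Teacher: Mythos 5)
Your plan — feed the specific pair $(t_0,s_0)$ from Lemma~\ref{lem: lemma2.3} into Fox's inequality~(\ref{eq:fox-ineq}), control $f_P$ and $g_P$ via Lemma~\ref{lem: lemma2.3} (using rotation-invariance of $c$-blockability for $g_P$), and then solve the resulting linear recursion for $\mathrm{ex}_P$ — is exactly the paper's strategy, so there is no genuinely different route here; the paper just phrases the ``iterate and sum'' step as a formal induction on $n$. However, the bookkeeping in your reduction contains real errors that would have to be fixed before the stated exponent $\alpha = 2a + 8c^2 + 32ac^2\ln c$ could be extracted.

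First, your estimate $\lambda = \mathrm{ex}_P(s_0-1) \approx s_0$ (your ``$(k-1)$-type factors'') is not available: the only a priori bound is the trivial $\mathrm{ex}_P(s_0-1) \le (s_0-1)^2$, which is $\Theta(s_0^2)$, not $\Theta(s_0)$. Since $t_0 = s_0^2$, this means $\lambda$ is \emph{not} ``comfortably'' below $t_0$; the gap is only $t_0 - \lambda \approx 2s_0$, so the contraction factor in the geometric series is close to $1$, and this margin must be tracked carefully. Second, the iterated bound should read $\mathrm{ex}_P(n) \le \bigl(1 + \tfrac{\mu}{t_0-\lambda}\bigr)n$, not $\tfrac{\mu}{1-\lambda/t_0}n$; your version overcounts by a factor of $t_0$. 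Third, you bound $\mathrm{ex}_P(t_0) \le t_0^2 = s_0^4$ inside $\mu$, but the paper's inductive verification effectively needs only a linear-in-$t_0$ contribution from that slot; carrying the extra $s_0^2$-ish factor through the chain inflates the required exponent, and it is not obvious that the specific $\alpha$ in the theorem absorbs it once the other two issues are also corrected. Concretely, after fixing (1) and (2) the bound you get is $c_P = O\!\bigl(\mathrm{ex}_P(t_0)\,f_P(t_0,s_0)/s_0\bigr)$; with $\mathrm{ex}_P(t_0)\le t_0^2$ this is $O(s_0^3 f_P)$, which is larger than the $O(s_0\,f_P)$ quantity that the paper's chain of inequalities is actually sized to control. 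You would either have to replace the $\mathrm{ex}_P(t_0)$ factor by the linear-in-$t$ version of Fox's lemma, or loosen $\alpha$. Finally, the ``chase the constants'' step is where all the content of Theorem~\ref{thm: thm2.4} lives (the paper devotes a full displayed chain of estimates, using $s \ge k^{ac/4}$, $R_A \ge \tfrac{ac^2\ln k}{2}$, $x_b^{-(R_A+2)} \le k^{2ac}$, etc.), and your proposal merely asserts that the exponents ``sum to at most $2a+8c^2+32ac^2\ln c$'' without verifying any of them; that is the part of the proof that cannot be hand-waved if the theorem's explicit exponent is the goal.
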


\begin{proof}
We use induction. For the base cases, clearly the statement is true for $n \leq k^{\alpha}$. 

Plugging in our bounds on $f_P(t, s)$, and equivalently $g_P(t, s)$, applied to inequality (\ref{eq:fpts_ineq}) and (\ref{eq:fox-ineq})  we have
$$\mathrm{ex}_P(s^2n) \leq (s-1)^2\mathrm{ex}_P(n) + 2\left(c^{R_A+2}k\binom{\beta k}{\beta k - \frac{1}{1-y}} + (2c)^{R_A+2}\left(\frac{k^at}{s(1-y)c-k^ac}\right)\right)$$

Recall that $a$ is the exponent from the hypothesis $\text{ex}_{P_i}(n) \leq k^an$ in Lemma \ref{lem: lemma2.1}, and that $\beta=2ck^{a-1}$, $x=x_b=1-\frac{1}{c}$, $y=y_b=1-\frac{1}{2c}-\frac{1}{16c^2}$, and lastly $$R_A = \left \lceil 1+\frac{\ln\left(c\sqrt{\beta k}\right)}{\ln\left(\frac{y_b}{\sqrt{x_b}}\right)} \right \rceil = \Theta(c^2 \ln(c\sqrt{\beta k})) = \Theta(c^2(O(\ln c) + \frac{a}{2} \ln k)) \geq \frac{ac^2 \ln k}{2}$$
Additionally, recall that $$s = \sqrt{\beta kx_b^{-(R_A+2)}} \geq \sqrt{2ck^a e^{R_A \ln(1/x_b)}} \geq \sqrt{2ck^ae^{\frac{ac^2 \ln k}{2}(1/c + 1/2c^2)}} \geq \sqrt{k^ak^{ac/2}} \geq k^{ac/4}$$
These lower bounds are estimates we will use in our future computations. 

Assuming that the theorem statement is true for all $n < n'$, consider the case $n = n'$, where in order for our induction to be complete, we need $$(s-1)^2 \cdot k^\alpha + 2\left(c^{R_A+2}k\binom{\beta k}{\beta k - \frac{1}{1-y}} + (2c)^{R_A+2}\left(\frac{k^{a}t}{s(1-y)c-k^{a}c}\right)\right) \leq s^2 \cdot k^\alpha$$ or equivalently $$ (2c)^{R_A+2}\left(k\binom{\beta k}{\beta k - \frac{1}{1-y}} + \left(\frac{k^{a}t}{s(1-y)c-k^{a}c}\right)\right) \leq (2s-1)k^\alpha$$
for which a close-to-optimal bound is $$\alpha = 2a + 8c^2+32ac^2 \ln c$$
which works because 
\begin{align*} 
&(2c)^{R_A+2} \Biggl( k \binom{\beta k}{\beta k - \frac{1}{1-y}} + \frac{k^a t}{s(1-y)c - k^a c} \Biggr) \notag\\
&\le2^{R_A+1}\left(2c^{R_A+2} \Biggl( k \binom{\beta k}{\beta k - \frac{1}{1-y}} + \frac{k^a t}{s(1-y)c - k^a c} \Biggr)\right) \notag\\
&\le 2^{R_A+1} \left(2c^{R_A+2} \cdot k(\beta k)^{\frac{1}{1-y}} + 2c^{R_A+2} \frac{k^as^2}{c(s/3)}\right) \quad \text{(as } s \geq k^{ac/4} \geq k^ac^{c/4} \geq \frac{3k^a}{2-3y} \implies s(1-y)-k^a \geq \frac{s}{3})\notag\\
&= 2^{R_A+1}\left(2c^{R_A+2} \cdot k (\beta k)^{\frac{1}{1-y}} + 6c^{R_A+2} \cdot k^a s\right)  \notag\\
&= 2^{R_A+1}\left(2c^{R_A+2} \cdot k (2c)^{\frac{1}{1-y}} k^{\frac{a}{1-y}} + 6c^{R_A+2} \cdot k^a (\beta k x^{-(R_A+2)})^{1/2}\right) \notag\\
&\le 2^{R_A+1}\left(k^{2c + 2 + \frac{8}{3}ac^2 \ln c + a(2c)} + k^{4c^2 + a + 8ac^2 \ln c}\right) \quad (\text{due to }  c^{R_A+2} \leq e^{\left(
\frac{\ln c\,\ln(c\sqrt{\beta k})}{\ln(y/\sqrt{x})}
\right)} \text{ , and } x^{-(R_A+2)}\le k^{2ac}) \notag\\
&\le 2^{R_A+1} (2k^{\alpha/2}) \notag\\
&\le 2s^c (2k^{\alpha/2}) \quad (\text{due to } \ln(2s^c) = \ln(2) + \frac{c}{2}\ln(\beta k) + \frac{c}{2}(R_A+2)(-\ln x_b) \geq R_A/2 \implies 2^{R_A+1} \leq 2s^c)\notag\\ 
&\le (2s-1) k^{\alpha} \quad (\text{due to } s=\sqrt{2c}k^{a/2}x_b^{-(R_A+2)/2} \text{, } x_b^{-(R_A+2)/2} \leq k^{4c \ln c} \implies s^{c-1}\leq k^{\alpha/2}) \nonumber
\end{align*}
\end{proof}
Hence, the Furedi-Hajnal limit (also known as the extremal constant) is $c(\pi) \leq k^{2a + 8c^2+32ac^2 \ln c}$. By Cibulka's equivalence $L(\pi) = O(c(\pi)^2)$, we know that $$L(B_{\{\mathcal{P}_1, \dots, \mathcal{P}_c\}}) \leq O(k^{4a + 16c^2+64ac^2 \ln c})$$
Since the above bound grows polynomially in $k$, Theorem \ref{thm:mainresult} follows.

\end{document}